 \tikzstyle{printersafe}=[decoration={snake,amplitude=0pt}]
 \tikzset{
 	>=stealth,
 	every picture/.style={thick},
 	graphs/every graph/.style={empty nodes},
 }
 \tikzstyle{vertex}=[
 \tikzstyle{printersafe}=[decoration={snake,amplitude=0pt}]
 \DeclareMathOperator{\Ima}{Im}
 \newcommand{\zz}{\mathbb{Z}}
 \newcommand{\Q}{\mathbb{Q}}
 \def\Ox #1.{\mathcal{O}_{#1}}			
 \def\pr #1.{\mathbb P^{#1}}				
 \def\af #1.{\mathbb A^{#1}}			
 \def\ses#1.#2.#3.{0\to #1\to #2\to #3 \to 0}	
 \def\xrar#1.{\xrightarrow{#1}}			
 \def\K#1.{K_{#1}}						
 \def\bA#1.{\mathbf{A}_{#1}}			
 \def\bM#1.{\mathbf{M}_{#1}}				
 \def\bL#1.{\mathbf{L}_{#1}}				
 \def\bB#1.{\mathbf{B}_{#1}}				
 \def\bK#1.{\mathbf{K}_{#1}}			
 \def\subs#1.{_{#1}}					
 \def\sups#1.{^{#1}}
 \DeclareFontFamily{U}{wncy}{}
 \DeclareFontShape{U}{wncy}{m}{n}{<->wncyr10}{}
 \DeclareSymbolFont{mcy}{U}{wncy}{m}{n}
 \DeclareMathSymbol{\Sh}{\mathord}{mcy}{"58}
 \newtheorem{theorem}{Theorem}[section]
 \newtheoremstyle{exampstyle}
 {\topsep} 
 {\topsep} 
 {} 
 {} 
 {\bfseries} 
 {.} 
 {.5em} 
 {} 
 \newtheorem{proposition}[theorem]{Proposition}
 \theoremstyle{definition}
 \newtheorem{remark}[theorem]{Remark}
 \theoremstyle{remark}
 \numberwithin{equation}{section}
\title{Computing the Cassels-Tate Pairing in the case of a Richelot Isogeny}
\author{Jiali Yan }
\date{\today}
\begin{document}
\maketitle

\begin{abstract}

In this paper,  we study the Cassels-Tate pairing on Jacobians of genus two curves admitting a special type of isogenies called  Richelot isogenies. Let $\phi: J \rightarrow \widehat{J}$ be a Richelot isogeny between two Jacobians of genus two curves. We give an explicit formula as well as a practical algorithm to compute the Cassels-Tate pairing on $\text{Sel}^{\widehat{\phi}}(\widehat{J}) \times \text{Sel}^{\widehat{\phi}}(\widehat{J})$ where $\widehat{\phi}$ is the dual isogeny of $\phi$. The formula and algorithm are under the simplifying assumption that all two torsion points on $J$ are defined over $K$. We also include a worked example demonstrating we can turn the descent by Richelot isogeny into a 2-descent via computing the Cassels-Tate pairing. \\
\end{abstract}
\section{Introduction}

For any principally polarized  abelian variety $A$ defined over a number field $K$, Cassels and Tate \cite{cassels1} \cite{cassels2} and \cite{tate} constructed a pairing 
$$\Sh(A) \times \Sh(A) \rightarrow \Q/\zz,$$
that is nondegenerate after quotienting out the maximal divisible subgroup of $\Sh(A)$. This pairing is called the Cassels-Tate pairing and it natually lifts to a pairing on Selmer groups. One application of this pairing is in improving the bound on the Mordell-Weil rank $r(A)$ obtained by performing a standard descent calculation. Suppose $\Sh(A)$ is finite, then  carrying out an $n$-descent and compute the Cassels-Tate pairing on $\text{Sel}^n(A) \times \text{Sel}^n(A)$ gives the same bound as obtained from the $n^2$-descent where $\text{Sel}^{n^2}(A)$ needs to be computed as shown in \cite[Proposition 1.9.3]{thesis}.\\

There have been many results on computing the Cassels-Tate pairing in the case of elliptic curves. For example, in addition to defining the pairing, Cassels also  described a method for computing the pairing on $\text{Sel}^2(E) \times \text{Sel}^2(E)$ in \cite{cassels98} by solving conics over the field of definition of a two-torsion point.  Donnelly \cite{steve} then decribed a method that only requires solving conics over $K$ and Fisher \cite{binary quartic} used the invariant theory of binary quartics to give a new formula for the Cassels-Tate pairing on $\text{Sel}^2(E) \times \text{Sel}^2(E)$ without solving any conics. In \cite{monique} \cite{3 isogeny},  van Beek and Fisher computed the Cassels-Tate pairing on the 3-isogeny Selmer group of an elliptic curve. For $p=3$ or $5$, Fisher computed the Cassels-Tate pairing on the $p$-isogeny Selmer group of an elliptic curve in a special case  in \cite{platonic}. In \cite{3 selmer}, Fisher and Newton computed the Cassels-Tate pairing on $\text{Sel}^3(E) \times \text{Sel}^3(E)$. We are interested in the natural problem of generalizing the different algorithms for computing the Cassels-Tate pairing for elliptic curves to compute the pairing for abelian varieties of higher dimension.\\

 In this paper,  we study the Cassels-Tate pairing on Jacobians of genus two curves admitting a special type of isogeny called  a Richelot isogeny. Let $\phi: J \rightarrow \widehat{J}$ be a Richelot isogeny between Jacobians of two genus two curves. We will be working under the simplifying assumption that all two-torsion points on $J$ are defined over $K$.  Consider the following long exact sequence
 
 \begin{equation}\label{eqn: exact}
 0 \rightarrow J[\phi](\Q) \rightarrow J[2](\Q) \rightarrow  \widehat{J}[\widehat{\phi}](\Q) \rightarrow \text{Sel}^{\phi}(J) \rightarrow \text{Sel}^2(J) \xrightarrow{\alpha} \text{Sel}^{\widehat{\phi}}(\widehat{J}).
 \end{equation}
Let $ \langle \;, \; \rangle_{CT}$ denote the Cassels-Tate pairing on $\text{Sel}^{\widehat{\phi}}(\widehat{J})$. It is shown in Remark \ref{rem: richelot H^1 SES} that we can replace $\text{Sel}^{\widehat{\phi}}(\widehat{J})$ with $\ker \langle \;, \; \rangle_{CT}$ and \eqref{eqn: exact} remains exact. Thus computing the pairing $\langle \;, \; \rangle_{CT}$ potentially improves the rank bound given by carrying out a descent by Richelot isogeny.\\

 In Section 2, we give some background results needed for the later sections and we define a pairing on $\text{Sel}^{\phi}(J) \times \text{Sel}^{\phi}(J)$ following the Weil pairing definition of the Cassels-Tate pairing for the Richelot isogeny $\phi$. In Section 3, we then give an explicit formula as well as a practical algorithm to compute the Cassels-Tate pairing on $\text{Sel}^{\widehat{\phi}}(\widehat{J}) \times \text{Sel}^{\widehat{\phi}}(\widehat{J})$ where $\widehat{\phi}$ is the dual isogeny of $\phi$ and also a Richelot isogeny. In Section 4, we give some details of the explicit computation and show directly that the formula for  the Cassels-Tate pairing is always a finite product with a computable bound. In Section 5,  we include a worked example demonstrating we can turn the descent by Richelot isogeny into a 2-descent via computing the Cassels-Tate pairing. The content of this paper is based on Chapter 2 of the thesis of the author \cite{thesis}.\\

\subsection*{Acknowledgements}
I  express my sincere and deepest gratitute to my PhD supervisor, Dr. Tom Fisher, for his patient guidance and insightful comments  at every stage during my research.\\

\section{Preliminary Results}

\subsection{The set-up}\label{sec:the-set-up}
In this paper, we are working over a number field $K$. For any field $k$, we let $\bar{k}$ denote its algebraic closure and  let $\mu_n \subset \bar{k}$ denote the $n^{th}$ roots of unity in $\bar{k}$.  We let $G_k$ denote the absolute Galois group $\text{Gal}(\bar{k}/k)$.\\

Let  $\mathcal{C}$ be a general \emph{genus two curve} defined over $K$ with all Weierstrass points defined over $K$, which  is a smooth projective curve and it can be given in the following hyperelliptic form:
\begin{equation}\label{eq:eq}
 C:  y^2 = f(x) = G_1(x)G_2(x)G_3(x),
 \end{equation}
where $G_1(x) = \lambda(x -\omega_1); G_2(x) = (x -\omega_2)(x-\omega_3); G_3(x) = (x -\omega_4)(x-\omega_5) $ with $\lambda, \omega_i \in K$ and $\lambda \neq 0$.\\

We let $J$ denote the \emph{Jacobian variety} of  $\mathcal{C}$, which is an abelian variety of dimension 2 defined over $K$ that can be identified with $\text{Pic}^0(\mathcal{C})$. We denote the identity element of $J$ by $\mathcal{O}_J$. Via the natural isomorphism $\text{Pic}^2(\mathcal{C}) \rightarrow \text{Pic}^0(\mathcal{C})$ sending $[P_1+P_2] \mapsto [P_1 + P_2 - \infty^+ - \infty^-]$,  a point $P \in J$ can be identified with an unordered pair of points of $\mathcal{C}$, $\{P_1, P_2\}$. This identification is unique unless $P= \mathcal{O}_J$, in which case it can be represented by any pair of points on $\mathcal{C}$ in the form $\{(x, y), (x, -y)\}$ or $\{\infty^+, \infty^-\}$. Moreover, $J[2]=\{\mathcal{O}_J, \{(\omega_i, 0), (\omega_j, 0)\}\text{ for } i\neq j, \{(\omega_i, 0), \infty\} \}$. Let $e_2: J[2] \times J[2] \rightarrow \mu_2$ denote the Weil pairing on $J[2]$. As described in \cite[Chapter 3, Section 3]{the book}, suppose $\{P_1, P_2\} $and $\{Q_1, Q_2\}$ represent $P, Q \in J[2]$ where $P_1, P_2, Q_1, Q_2$ are Weierstrass points, then  \begin{equation}\label{eqn: e2}
e_2(P, Q)= (-1)^{|\{P_1, P_2\} \cap \{Q_1, Q_2\} |}.
\end{equation}\\

\subsection{Richelot isogenies}\label{sec:richelot-isogeny}A \emph{Richelot isogeny} is a polarized $(2, 2)$-isogeny between Jacobians of genus 2 curves. Equivalently, it is an isogeny $\phi : J \rightarrow \widehat{J}$ such that $J[\phi] \cong \zz/2\zz \times \zz/2\zz$ and $J, \widehat{J}$ are Jacobians of genus two curves. \\

 A special case of  \cite[Proposition 16.8]{abelian varieties1} and \cite[Lemma 2.4]{bruin} shows that the kernel of a Richelot isogeny is actually a maximal isotropic subgroup of $J[2]$ with respect to the Weil pairing $e_2$ on $J[2] \times J[2]$. We have the following general proposition on Richelot isogenies from \cite[Chapter 9 Section 2]{the book} and \cite[Section 3]{arbitrarily large}. In Remark \ref{rem:richelot}, we gave the extra details for the case where the hyperelliptic form of the underlying curve is degree 5.\\

\begin{proposition}\label{prop:Richelot_explicit_formulae}
	Suppose the curve $\mathcal{C}$ is of the form 
	
	$$\mathcal{C}: y^2 = f(x) = G_1(x)G_2(x)G_3(x),$$ where $G_j(x)= g_{j2}x^2 + g_{j1}x + g_{j0},$ and each $g_{ji} \in K$. Then there is a Richelot isogeny $\phi$ from $J$, the Jacobian of $\mathcal{C}$, to $\widehat{J}$, the Jacobian of the following genus two curve:
	\begin{equation}\label{eq: eq 2}
	\widehat{\mathcal{C}}: \triangle y^2 = L_1(x)L_2(x)L_3(x),
	\end{equation}
	where each $L_i(x)= G'_j(x)G_k(x)- G_j(x)G_k'(x),$ for $[i, j, k] = [1, 2, 3], [2, 3, 1], [3, 1, 2]$, and $\triangle = \det(g_{ij})$ which we assume to be non-zero. \newline
	
	In addition, the kernel of $\phi$ consists of the identity $\mathcal{O}_J$ and the 3 divisors of order 2 given by $G_i=0$. We have the similar result  for the dual isogeny $\widehat{\phi}$.\\
	
	Moreover, any genus two curve $\mathcal{C}$  that admits a Richelot isogeny with all the elements of the kernel $K$-rational is of the form $y^2 = f(x) = G_1(x)G_2(x)G_3(x)$ as above.\\
	
\end{proposition}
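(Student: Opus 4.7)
The argument splits into three tasks: (i) exhibit a maximal isotropic subgroup $K\subset J[2]$ of order $4$ generated by the three divisor classes $G_i=0$; (ii) identify the quotient $\widehat{J}:=J/K$ with $\Pic^0(\widehat{\mathcal{C}})$ for the explicit curve $\widehat{\mathcal{C}}$ displayed in \eqref{eq: eq 2}; and (iii) run the construction in reverse to establish the converse. Tasks (i) and (iii) are essentially combinatorial in $J[2]$ via the Weil-pairing formula \eqref{eqn: e2}, whereas the heart of the argument is (ii).

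For (i), I would let $D_i\in J$ be the class of the unordered pair of Weierstrass points whose $x$-coordinates are the roots of $G_i$ (in the degree-$5$ setting, one pair contains the unique point at infinity). Each $D_i$ has order dividing $2$ because $\ddivv(G_i)$ equals $2D_i$ under the identification $\Pic^2(\mathcal{C})\cong\Pic^0(\mathcal{C})$, and the relation $D_1+D_2+D_3=\mathcal{O}_J$ follows by inspecting $\ddivv(y)$. Hence $K=\{\mathcal{O}_J,D_1,D_2,D_3\}\cong(\zz/2\zz)^2$, and the pairwise disjointness of the three pairs of Weierstrass points gives $e_2(D_i,D_j)=1$ by \eqref{eqn: e2}. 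Since $|K|=4=2^{\dim J}$, $K$ is maximal isotropic.

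For (ii), a maximal isotropic subgroup of $J[2]$ is the kernel of a polarized isogeny with principally polarized target. To identify this target with $\Pic^0(\widehat{\mathcal{C}})$ I would exhibit an explicit correspondence $Z\subset\mathcal{C}\times\widehat{\mathcal{C}}$. The correct equations are dictated by the Richelot identity $G_1 L_1+G_2 L_2+G_3 L_3\equiv 0$, an algebraic consequence of the definition $L_i=G_j'G_k-G_jG_k'$; this identity forces the $L_i$ to be the cross-Wronskians indicated, while the scalar $\triangle=\det(g_{ij})$ appears as the non-degeneracy constant of the bilinear substitution that recovers $\mathcal{C}$ from $\widehat{\mathcal{C}}$. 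Pulling the divisors $L_i=0$ on $\widehat{\mathcal{C}}$ back through $Z$ should recover $2D_i$ on $\mathcal{C}$, which both confirms that the induced isogeny $J\to\Pic^0(\widehat{\mathcal{C}})$ has kernel $K$ and yields the symmetric description of $\widehat{\phi}$.

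For (iii), suppose $\phi:J\to\widehat{J}$ is any Richelot isogeny with pointwise $K$-rational kernel $\{\mathcal{O}_J,P_1,P_2,P_3\}$. Each $P_i$ is represented by an unordered pair of Weierstrass points, and isotropy via \eqref{eqn: e2} combined with $P_1+P_2+P_3=\mathcal{O}_J$ forces these three pairs to be pairwise disjoint, hence to partition the six Weierstrass points (with $\infty$ adjoined if $\deg f=5$). Letting $G_i$ be the monic polynomial whose roots are the $x$-coordinates of the $i$-th pair and collecting a leading factor $\lambda$ produces $f(x)=\lambda G_1(x)G_2(x)G_3(x)$. The main obstacle is step (ii): one must produce an actual morphism, not merely a principally polarized quotient, and verify the concrete formula. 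The degree-$5$ case flagged by Remark \ref{rem:richelot} needs extra bookkeeping when some $g_{j2}=0$, since then one $L_i$ drops degree and one must check separately the smoothness and genus of $\widehat{\mathcal{C}}$ at infinity.
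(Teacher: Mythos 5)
First, note that the paper does not prove this proposition at all: it is quoted from \cite[Chapter 9 Section 2]{the book} and \cite[Section 3]{arbitrarily large}, with only Remark \ref{rem:richelot} added to handle the degree-$5$ and $\triangle=0$ caveats. So the comparison here is really between your plan and the classical argument of Cassels--Flynn, which your outline does follow in spirit. Your steps (i) and (iii) are essentially complete and correct: $2D_i=\mathcal{O}_J$ from the divisor of $G_i(x)$, $D_1+D_2+D_3=\mathcal{O}_J$ from the divisor of $y$, isotropy from \eqref{eqn: e2} and disjointness of the pairs, and the converse partition argument all work (for (iii) you do need to know that the kernel of a polarized $(2,2)$-isogeny is isotropic --- the paper gets this from \cite[Proposition 16.8]{abelian varieties1} and \cite[Lemma 2.4]{bruin} --- since a non-isotropic $(\zz/2\zz)^2$ in $J[2]$, e.g.\ generated by two pairs sharing one Weierstrass point, would not force a partition).

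The genuine gap is exactly where you flag it: step (ii) is the entire content of the proposition, and your proposal does not carry it out. Knowing that a maximal isotropic $K\subset J[2]$ yields a principally polarized quotient does not identify that quotient with the Jacobian of the specific curve $\triangle y^2=L_1L_2L_3$; indeed the quotient need not be a Jacobian at all (when $\triangle=0$ it is a product of elliptic curves, per Remark \ref{rem:richelot}), so the hypothesis $\triangle\neq 0$ must do real work that your sketch never locates. Concretely, three things are missing: (a) a verification that $\triangle\neq 0$ together with $f$ squarefree forces $L_1L_2L_3$ to be squarefree of the right degree, so that $\widehat{\mathcal{C}}$ is actually a smooth genus-two curve (not only at infinity in the degree-$5$ case); (b) the explicit correspondence $Z\subset\mathcal{C}\times\widehat{\mathcal{C}}$ cut out by $G_2(x)L_2(z)+G_3(x)L_3(z)=0$ and $yt=G_2(x)L_2(z)(x-z)$ --- the identity $\sum G_iL_i\equiv 0$ that you cite motivates these equations but does not by itself produce them or prove they define a morphism of Jacobians; and (c) the computation that the induced map kills exactly $\{\mathcal{O}_J,D_1,D_2,D_3\}$ and has degree $4$, rather than the weaker claim that pulling back $L_i=0$ ``should recover $2D_i$.'' As written, the argument establishes only that some principally polarized $(2,2)$-quotient exists, which is strictly less than the statement being proved; to close the gap one must either perform the correspondence computation in full or, as the paper does, cite it.
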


\begin{remark}\label{rem:richelot}
	We exclude the case $\triangle=0$ in the above proposition. In fact, by \cite[Chapter 14]{the book}, $\triangle=0$ implies that the Jacobian of $\mathcal{C}$ is the product of elliptic curves. It can be checked that the analogue of $\triangle$ for $\widehat{C}$ is $2\triangle^2$, so no need to have further condition like $\triangle \neq 0$ from $\widehat{C}$. Also, in the case where $G_i$ is linear, say $G_i=a(x-b)$, then we say $\{(b, 0), \infty\}$ is the divisor given by $G_i=0$ which gives an element in $\ker \phi$.\\
	
\end{remark}

We use the notation in Proposition \ref{prop:Richelot_explicit_formulae} and denote the nontrivial elements in the kernel of $\phi$ by $P_i$ corresponding to the divisors of order 2 given by $G_i=0$ as well as denote  the nontrivial elements in the kernel of $\widehat{\phi}$  by $P_i'$. From  \cite[Chapter 9, Section 2]{the book} and \cite[Section 3.2]{richelot}, we have the following description of the Richelot isogeny $\phi$. Associated with a Weierstrass point $P=(\omega_1, 0)$ with $G_1(\omega_1)=0$, $\phi:J \rightarrow \widehat{J}$ is given explicitly as \\
$$\{(x, y), P\} \mapsto \{(z_1, t_1),(z_2, t_2)\},$$
where $z_1, z_2$ satisfy 
$$G_2(x)L_2(z)+G_3(x)L_3(z)=0;$$
and $(z_i, t_i)$ satisfies
$$yt_i=G_2(x)L_2(z_i)(x-z_i).$$\\

Denote the set of two points on $\mathcal{C}$ given by $G_i=0$ by $S_i$ for $i=1, 2, 3$. From the explicit description above, we know that the preimages of $P_1'$ under $\phi$ are precisely $\{\{Q_1, Q_2\} \in J[2]$ such that $Q_1 \in S_2, Q_2 \in S_3\}$. Similarly we know the preimages of $P_2'$ and $P_3'$.\\



\subsection{The Weil pairing for the Richelot isogeny}\label{sec:weil-pairing-for-richelot-isogeny}

Let $J$ and $\widehat{J}$ be Jacobian varieties of genus two curves defined over $K$. Assume there is  a Richelot isogeny $\phi: J \rightarrow \widehat{J}$ with $\widehat{\phi}$ being its dual, i.e. $\phi \circ \widehat{\phi} = [2]$. Then we have the Weil pairing 
$$e_{\phi} : J[\phi] \times \widehat{J}[\widehat{\phi}] \rightarrow \bar{K}^*,$$
where  $e_{\phi}(P, Q)= e_{2, J}(P, Q')$ for any $Q' \in J[2]$ such that $\phi(Q')=Q$. The image of $e_{\phi}$ is  in fact $\mu_2(\bar{K}^*) \subset \bar{K}^*$. Recall $J[ \phi]$ is isotropic with respect to $e_{2, J}$ as discussed in \ref{sec:richelot-isogeny}. This implies that $e_{2, J}(P, Q')=e_{2, J}(P, Q'')$ if $\phi(Q')=\phi(Q'')$ and hence $e_{\phi}$ is well-defined. By \eqref{eqn: e2} and the end of Section \ref{sec:richelot-isogeny}, $e_{\phi}(P_i, P_i')=1$ for any $i=1, 2, 3$ and $e_{\phi}(P_i, P_j')=-1$ for any $i \neq j$. \\

\subsection{Definition of the Cassels-Tate pairng in the case of a Richelot isogeny}\label{sec:definition-of-the-cassels-tate-pairng-in-the-case-of-a-richelot-isogeny}

In this section, we give the definition of the Cassels-Tate pairing in the case of a Richelot isogeny. There are four equivalent definitions of the Cassels-Tate pairing stated and proved in \cite{poonen stoll}. The compatibility of the definition below with the Weil Pairing definition of the Cassels-Tate pairing on $\text{Sel}^2(J) \times \text{Sel}^2(J)$ is shown in  \cite[Proposition 2.1.6]{thesis}.\\

Let $(J, \lambda_1)$ and $(\widehat{J}, \lambda_2)$ be Jacobian varieties of genus two curves defined over a number field $K$ such that there exists a Richelot isogeny $\phi: J \rightarrow \widehat{J}$ with $\widehat{\phi}: \widehat{J}\rightarrow J$ being its dual isogeny. It is shown in \cite[Lemma 2.1.4]{thesis} that for any $b \in \text{Sel}^{\phi}(J)$, there exists $b_1 \in H^1(G_K, \widehat{J}[2])$ mapping to $b$ under the map induced by $\widehat{J}[2] \xrightarrow{\widehat{\phi}} J[\phi]$.\\

\textbf{The definition of the pairing}\\

Let $a, a' \in \text{Sel}^{\phi}(J)$. Suppose $a_1 \in H^1(G_K, \widehat{J}[2])$ maps to $a \in \text{Sel}^{\phi}(J) \subset H^1(G_K, J[\phi])$ under the map induced by $\widehat{J}[2] \xrightarrow{\widehat{\phi}} J[\phi]$.\\

Consider the commutative diagram below.\\
\[
\begin{tikzcd}
\widehat{J}(K_v) \arrow[r, "\widehat{\phi}"'] \arrow[d, "="']&J(K_v) \arrow[r, "\delta_{\widehat{\phi}}"'] \arrow[d, "\phi"']&H^1(G_{K_v}, \widehat{J}[\widehat{\phi}]) \arrow[d, "\iota"', "\rho_v \mapsto \delta_{2}(P_v) -a_{1,v}"]\\
\widehat{J}(K_v) \arrow[r, "2"'] \arrow[d, "\widehat{\phi}"']&\widehat{J}(K_v) \arrow[r, "\delta_{2}"'] \arrow[d, "="']&H^1(G_{K_v}, \widehat{J}[2]) \arrow[d, "\widehat{\phi}"', "\delta_{2}(P_v) \mapsto a_v \;\; a_{1,v} \mapsto a_v"]\\
J(K_v) \arrow[r, "\phi"']&\widehat{J}(K_v) \arrow[r, "\delta_{\phi}"', "P_v \mapsto a_v"]&H^1(G_{K_v}, J[\phi])\\
\end{tikzcd} \]

Let $v$ be a place of $K$. Let $P_v \in \widehat{J}(K_v)$ be a lift of $a_v \in H^1(G_{K_v}, J[\phi])$. Then $\delta_{2}(P_v)$ and $a_{1, v}$ in $H^1(G_{K_v}, \widehat{J}[2])$ both map to $a_v$. Hence, we may choose  $\rho_v \in H^1(G_{K_v}, \widehat{J}[\widehat{\phi}])$ a lift of  $\delta_{2}(P_v)-a_{1, v}$ and define $\eta_v = \rho_v \cup_{\widehat{\phi}, v} a_v' \in H^2(G_{K_v}, \bar{K_v}^*)$. Here $\cup_{\widehat{\phi}, v}$ denotes  the cup product $H^1(G_{K_v}, \widehat{J}[\widehat{\phi}]) \times H^1(G_{K_v}, J[\phi]) \rightarrow H^2(G_{K_v}, \bar{K}^*)$ associated to $e_{\widehat{\phi}}$. The Cassels-Tate pairing is defined by\\
$$\langle a, a'\rangle_{CT} := \sum_{v}\text{inv}_v(\eta_v).$$

We sometimes refer to $\text{inv}_v(\eta_v)$  above as the local Cassels-Tate pairing between $a, a' \in \text{Sel}^{\phi}(J)$ for a place $v$ of $K$, noting that this depends on the choice of the global lift $a_1$.\\

\begin{remark}
	In \cite[Proposition 1.8.4]{thesis}, the Weil pairing definition of the Cassels-Tate pairing is proved to be independent of the choices made in the definition. Since the above pairing is compatible with the Weil pairing definition as in  \cite[Proposition 2.1.6]{thesis}, we know it is also independent of the choices we make.\\

\end{remark}

\section{Computation of the Cassels-Tate Pairing}

 Recall that we are working with a genus curve $\mathcal{C}$ in the form \eqref{eq:eq} and we fix a choice of Richelot isogeny  $\phi: J \rightarrow \widehat{J}$ where $J$ is the Jacobian of $\mathcal{C}$ and $\widehat{J}$ is the Jacobian of the genus two curve defined by \eqref{eq: eq 2}. We write $\widehat{\phi}$ for the dual of $\phi$. This implies that all points in $J[2]$ are defined over $K$ and all points in $\widehat{J}[\widehat{\phi}]$ are defined over $K$ by Proposition \ref{prop:Richelot_explicit_formulae}. Recall, we denote the nontrivial elements in $J[\phi]$ by $P_1, P_2, P_3$ where $P_i$ corresponds to the divisor given by $G_i=0$ and the nontrivial elements in $\widehat{J}[\widehat{\phi}]$ by $P_1', P_2', P_3'$ where $P_i'$ corresponds to the divisor given by $L_i=0$ as in the same Proposition. In this section, we will give a practical formula for the explicit computation for the Cassels-Tate pairing in the case of Richelot isogenies.\\

\subsection{Explicit embeddings of $\mathbf{H^1(G_K, J[\phi])}$ and $\mathbf{H^1(G_K, J[2])}$}\label{sec:explicit-embeddings}
In order to give the  formula for the Cassels-Tate pairing,  we first describe some well-known embeddings that are useful for the explicit computation.\\

Recall  all points in $J[2]$ and $\widehat{J}[\widehat{\phi}]$ are defined over $K$. From the exact sequence
$$0 \rightarrow J[\phi] \xrightarrow{w_{\phi}} (\mu_2)^3 \xrightarrow{N} \mu_2 \rightarrow 0 ,$$
where ${w_{\phi}}: P \mapsto (e_{\phi}(P, P_1'), e_{\phi}(P, P_2'), e_{\phi}(P, P_3'))$  and $N: (a, b, c) \mapsto abc$, we  get

$$H^1(G_K, J[\phi]) \xrightarrow{inj} H^1(G_K, (\mu_2)^3) \cong (K^*/(K^*)^2)^3 \xrightarrow{N_*} H^1(G_K, \mu_2) \cong K^*/(K^*)^2,$$
where $\cong$ denotes the Kummer isomorphism derived from Hilbert's Theorem 90 and $N_*$ is induced by $N$. The induced map $H^1(G_K, J[\phi]) \rightarrow H^1(G_K, (\mu_2)^3)$ is injective as the map $(\mu_2)^3 \xrightarrow{N} \mu_2$ is surjective. Furthermore, the image of this  injection contains precisely all the elements with norm a square by the exactness of the sequence above, i.e. $H^1(G_K, J[\phi])\cong \ker ((K^*/(K^*)^2)^3 \xrightarrow{N_*} K^*/(K^*)^2)$. We have a similar embedding for $H^1(G_K,\widehat{J}[\widehat{\phi}])$. \\

Also, from the exact sequence
$$0 \rightarrow J[2] \xrightarrow{w_2} (\mu_2)^5 \xrightarrow{N} \mu_2 \rightarrow 0, $$
where  ${w_2}:P \mapsto (e_2(P, \{(\omega_1, 0), \infty\}), ..., e_2(P, \{(\omega_5, 0), \infty\}))$ and $N: (a, b, c, d, e) \mapsto abcde$, we get

$$H^1(G_K, J[2]) \xrightarrow{inj} H^1(G_K, (\mu_2)^5) \cong (K^*/(K^*)^2)^5 \xrightarrow{N_*} H^1(G_K, \mu_2) \cong K^*/(K^*)^2,$$
where $\cong$ denotes the Kummer isomorphism derived from Hilbert's Theorem 90 and $N_*$ is induced by $N$. Again the induced map $H^1(G_K, J[2]) \rightarrow H^1(G_K, (\mu_2)^5)$ is injective as the map $(\mu_2)^5 \xrightarrow{N} \mu_2$ is surjective. Furthermore, the image of this injection also contains precisely all the elements with norm a square from the exact sequence above. In particular, we have $$H^1(G_K, J[2]) \cong (K^*/(K^*)^2)^4.$$\\

\subsection{Explicit Formula}\label{sec:explicit-formula}
Using the embeddings described in Section \ref{sec:explicit-embeddings}, we can now state and prove the explicit formula for the Cassels-Tate pairing in the case of a Richelot isogeny.\\

\begin{proposition}\label{prop:explicit weil pairing cup }
	
	Under the embeddings of $H^1(G_K, J[\phi])$ and $H^1(G_K, \widehat{J}[\widehat{\phi}])$ in $(K^*/(K^*)^2)^3$ as described in Section \ref{sec:explicit-embeddings}, we get that the cup product $\cup_{\phi}$ induced by $e_{\phi}$ is
	$$H^1(G_K, J[\phi]) \times H^1(G_K, \widehat{J}[\widehat{\phi}]) \rightarrow \mathrm{Br}(K)[2]$$
	$$((a_1, b_1, c_1), (a_2, b_2, c_2)) \mapsto(a_1, a_2)+(b_1, b_2)+(c_1, c_2),$$
	where $(\;,\;)$ represents the quaternion algebra and also its equivalence class in $\mathrm{Br}(K)[2]$.\\

\end{proposition}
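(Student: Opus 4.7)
The plan is to factor the pairing $e_\phi$ through the explicit embeddings $w_\phi\colon J[\phi]\hookrightarrow\mu_2^3$ and $w_{\widehat\phi}\colon \widehat{J}[\widehat{\phi}]\hookrightarrow\mu_2^3$ of Section \ref{sec:explicit-embeddings}, so that the induced cup product reduces to a sum of three standard cup products on $H^1(G_K,\mu_2)$, each identified with a Hilbert symbol under Kummer theory.

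Concretely, I would introduce the $\mathbb{F}_2$-bilinear pairing
$$\langle\;,\;\rangle\colon \mu_2^3\times\mu_2^3\rightarrow\mu_2,\qquad ((x_1,x_2,x_3),(y_1,y_2,y_3))\longmapsto \prod_{i=1}^{3}(-1)^{u_iv_i},$$
where $x_i=(-1)^{u_i}$ and $y_i=(-1)^{v_i}$. The first step is then to verify the identity $e_\phi(P,Q)=\langle w_\phi(P),w_{\widehat\phi}(Q)\rangle$ for all $(P,Q)\in J[\phi]\times\widehat{J}[\widehat{\phi}]$. By $\mathbb{F}_2$-bilinearity of both sides it suffices to check it on the nine pairs $(P_i,P_j')$. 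Here the computation at the end of Section \ref{sec:weil-pairing-for-richelot-isogeny} gives $e_\phi(P_i,P_j')=1$ if $i=j$ and $-1$ otherwise, and by definition $w_\phi(P_i)$ has $1$ in the $i$-th slot and $-1$ in the other two, with the same shape for $w_{\widehat\phi}(P_j')$; a direct check shows the dot product reproduces this table.

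With the factorization in hand, the remainder is functoriality of the cup product. Applying $H^1(G_K,-)$, the maps $(w_\phi)_*$ and $(w_{\widehat\phi})_*$ are the embeddings into $(K^*/(K^*)^2)^3$ described in Section \ref{sec:explicit-embeddings}. Because the cup product is bilinear in the coefficient pairing and $\langle\;,\;\rangle$ is the sum of three coordinate-wise multiplication pairings $\mu_2\times\mu_2\to\mu_2$, the induced cup product on $H^1(G_K,\mu_2^3)$ decomposes as the sum of three cup products, one per factor. Under the Kummer isomorphism each of these is the standard cup product $K^*/(K^*)^2\times K^*/(K^*)^2\to\mathrm{Br}(K)[2]$, represented by the quaternion algebra class $(a,b)$, yielding the claimed formula after restricting to the images of the two embeddings.

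The main (modest) obstacle is purely the compatibility check in the first step: one must be careful with the asymmetric indexing, since the two arguments of $e_\phi$ come from different Galois modules, and organize the two triples $w_\phi(P_i)$ and $w_{\widehat\phi}(P_j')$ so that their componentwise product, summed exponentially in $\mu_2$, reproduces the table of values $e_\phi(P_i,P_j')$. Once the factorization is established, the remaining passage to $H^1$ via functoriality of cup product and the identification of coordinate cup product with the quaternion algebra class are standard.
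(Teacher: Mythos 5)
Your proposal is correct and follows essentially the same route as the paper: factor $e_\phi$ through the exponent-wise dot-product pairing on $\mu_2^3$ via the embeddings $w_\phi$ and $w_{\widehat\phi}$ (checked on the pairs $(P_i,P_j')$ using $e_\phi(P_i,P_j')=(-1)^{1-\delta_{ij}}$), then invoke functoriality of the cup product and the identification of the $\mu_2$-cup product with the quaternion algebra class under the Kummer isomorphism. The only difference is that you spell out the compatibility check that the paper leaves as "it can be checked."
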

\begin{proof}
	Recall that the embedding $J[\phi]\rightarrow (\mu_2)^3$ is given by sending $P\in J[\phi]$ to $$ (e_\phi(P,P_1'),e_\phi(P,P_2'),e_\phi(P,P_3'))$$ and the embedding $\widehat{J}[\widehat{\phi}]\rightarrow (\mu_2)^3$ is given by sending $Q \in \widehat{J}[\widehat{\phi}] $ to $$(e_\phi(P_1,Q),e_\phi(P_2,Q),e_\phi(P_3,Q)).$$
	
	It can be checked, via the end of discussion of Section \ref{sec:richelot-isogeny}, that we have the following commutative diagram:
	\[
	\begin{tikzcd}
	J[\phi] \times \widehat{J}[\widehat{\phi}] \arrow[r,"inj"] \arrow[d, "e_{\phi}"'] & (\mu_2)^3 \times (\mu_2)^3 \arrow[d, "f"]\\
	\mu_2 \arrow[r, "="]& \mu_2,
	\end{tikzcd}
	\]
	where $f$ sends $((-1)^a, (-1)^b, (-1)^c), ((-1)^{a'}, (-1)^{b'}, (-1)^{c'})$ to $  (-1)^{aa'+bb'+cc'} $ with $a, b, c \in \{0, 1\}$.\\
	
	Consider the natural pairing $\phi: \mu_2 \times \mu_2\rightarrow \mu_2$ sending $((-1)^a,(-1)^b)$ to $(-1)^{ab}$.
	This gives a cup product pairing 
	$$
	\begin{array}{ccc}
	H^1(G_K, \mu_2) \times H^1(G_K, \mu_2) & \longrightarrow & H^2(G_K, \mu_2) \cong \text{Br}(K)[2]\\
	([\sigma \mapsto a_{\sigma}], [\tau \mapsto b_{\tau}]) & \longmapsto& [(\sigma, \tau) \mapsto \phi(a_{\sigma}, b_{\tau})]
	\end{array}.$$
	By  Hilbert's Theorem 90, we can identify $H^1(G_K, \mu_2)$ with $K^*/(K^*)^2$. Under this identification, the image of $(a, b) \in K^*/(K^*)^2 \times K^*/(K^*)^2$ is precisely the equivalence class of the quaternion algebra $(a, b)$ by  \cite[Chapter XIV, Section 2, Proposition 5]{local fields} and \cite[Corollary 2.5.5(1), Proposition 4.7.3]{QA}.\\
	
	Therefore, we get that the induced cup product is
	$$H^1(K, J[\phi]) \times H^1(K, \widehat{J}(\widehat{\phi})) \rightarrow \text{Br}(K)[2]$$
	$$((a_1, b_1, c_1), (a_2, b_2, c_2)) \mapsto(a_1, a_2)+(b_1, b_2)+(c_1, c_2).$$

\end{proof}

\begin{proposition}\label{prop: explicit Richelot map 1}
	
	Under the embeddings of $H^1(G_K, J[\phi])$ and $ H^1(G_K, J[2])$ in $(K^*/(K^*)^2)^3$ and $(K^*/(K^*)^2)^5$ respectively as described in Section \ref{sec:explicit-embeddings}, the map $\Psi: H^1(G_K, J[\phi]) \rightarrow H^1(G_K, J[2])$ induced from  the inclusion $J[\phi] \rightarrow  J[2]$ is given by  
	
	
	$$(a, b, c) \mapsto (1, c, c, b, b).$$
	\\
	
\end{proposition}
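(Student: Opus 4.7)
The plan is to reduce everything to linear algebra over $\mathbb{F}_2$: since all of $J[2]$ (and hence $\widehat{J}[\widehat{\phi}]$) is $K$-rational, the Galois action on $J[\phi]$, $J[2]$, $(\mu_2)^3$ and $(\mu_2)^5$ is trivial, so the Kummer-theoretic embeddings $w_\phi$ and $w_2$ are $\mathbb{F}_2$-linear and the induced map on $H^1$'s is obtained by post-composing with the inclusion $J[\phi]\hookrightarrow J[2]$. By functoriality it therefore suffices to verify the formula $(a,b,c)\mapsto(1,c,c,b,b)$ on the images of the three nontrivial elements $P_1,P_2,P_3\in J[\phi]$, since they span $J[\phi]$ over $\mathbb{F}_2$.

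First I would compute $w_\phi(P_i)\in(\mu_2)^3$ using the values $e_\phi(P_i,P_i')=1$ and $e_\phi(P_i,P_j')=-1$ for $i\neq j$ recorded at the end of Section \ref{sec:weil-pairing-for-richelot-isogeny}. This yields
\[
w_\phi(P_1)=(1,-1,-1),\quad w_\phi(P_2)=(-1,1,-1),\quad w_\phi(P_3)=(-1,-1,1).
\]
Next I would compute $w_2(P_i)\in(\mu_2)^5$ using equation \eqref{eqn: e2}, with the identifications $P_1=\{(\omega_1,0),\infty\}$, $P_2=\{(\omega_2,0),(\omega_3,0)\}$, $P_3=\{(\omega_4,0),(\omega_5,0)\}$, and $W_i=\{(\omega_i,0),\infty\}$. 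A short intersection count gives
\[
w_2(P_1)=(1,-1,-1,-1,-1),\quad w_2(P_2)=(1,-1,-1,1,1),\quad w_2(P_3)=(1,1,1,-1,-1),
\]
where the first coordinate is always $1$ because $J[\phi]$ is isotropic for $e_2$.

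Finally I would substitute each triple into $(a,b,c)\mapsto(1,c,c,b,b)$ and verify that it reproduces the corresponding quintuple in each of the three cases. By $\mathbb{F}_2$-linearity this establishes the formula for the linear map $w_2\circ(J[\phi]\hookrightarrow J[2])$, and then functoriality of $H^1$ together with Kummer theory gives the claim on cohomology classes. As a sanity check I would note that if $abc$ is a square (so $(a,b,c)$ lies in the image of $H^1(G_K,J[\phi])$), then $1\cdot c\cdot c\cdot b\cdot b=b^2c^2$ is a square, so the image indeed lies in the subgroup of $(K^*/(K^*)^2)^5$ corresponding to $H^1(G_K,J[2])$. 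The only obstacle is bookkeeping with the intersection counts and the $e_\phi$ values; the argument itself is mechanical.
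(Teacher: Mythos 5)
Your proposal is correct and follows essentially the same route as the paper: both reduce the statement to finding the map $\psi:(\mu_2)^3\to(\mu_2)^5$ making the square with $w_\phi$, $w_2$ and the inclusion $J[\phi]\hookrightarrow J[2]$ commute, and then pass to cohomology by functoriality of the Kummer isomorphism. The only cosmetic difference is that you verify $\psi$ by evaluating $w_\phi$ and $w_2$ on the three generators $P_1,P_2,P_3$ via the intersection formula \eqref{eqn: e2}, whereas the paper derives it for a general $P\in J[\phi]$ from the identity $e_\phi(P,\phi(Q))=e_2(P,Q)$ together with the known preimages of the $P_i'$ and the isotropy of $J[\phi]$; your computed values all check out.
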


\begin{proof}\label{proof: explicit richelot map 1}
	
	Recall the embedding of $H^1(G_K, J[2])$ in $(K^*/(K^*)^2)^5 $, and  the embedding of $H^1(G_K, J[\phi])$ in $(K^*/(K^*)^2)^3$ are induced from the short exact sequences in the following commutative diagram:
	\[\begin{tikzcd}
	0 \arrow[r]&J[\phi] \arrow[r, "{w_{\phi}}"] \arrow[d, "inc"]& (\mu_2)^3 \arrow[r, "N"] \arrow[d, "\psi"]& \mu_2 \arrow[r] \arrow[d, "="]& 0\\
	0 \arrow[r]&J[2] \arrow[r, "{w_2}"]& (\mu_2)^5 \arrow[r, "N"]& \mu_2 \arrow[r]& 0.\\
	\end{tikzcd}\]
	
	Suppose $P \in J[\phi]$ maps to $(\alpha, \beta, \gamma)$ via $w_{\phi}$. Then $e_{\phi}(P, P_1')=\alpha, e_{\phi}(P, P_2')=\beta, e_{\phi}(P, P_3')=\gamma.$ By definition, $e_{\phi}(P, \phi(Q))=e_2(P, Q)$ for any $Q \in J[2]$. From the explicit description of $\phi$ in Section \ref{sec:richelot-isogeny}, we know $ \alpha= e_{2}(P,  \{(\omega_2, 0), (\omega_4, 0)\})$, $\beta= e_{2}(P,  \{(\omega_1, 0), (\omega_5, 0) \})$ and $\gamma = e_{2}(P,\{\infty, (\omega_3, 0) \})$. Recall that $J[\phi]$ is isotropic with respect to $e_2$. This implies that $w_{2}(P) = (1, \gamma, \gamma, \beta, \beta)$. Therefore,  we define $\psi(\alpha, \beta, \gamma) = (1, \gamma, \gamma, \beta, \beta)$, which makes the above diagram commute.\\
	
	
	
	Now consider $\Psi: H^1(G_K, J[\phi])  \rightarrow H^1(G_K, J[2])$ which, via the embedding in Section \ref{sec:explicit-embeddings}, is the map $H^1(G_K, (\mu_2)^3) \rightarrow H^1(G_K, (\mu_2)^5)$ induced by  $\psi$.  It can be checked that $\Psi(a, b, c) = (1, c, c, b, b).$\\
	
\end{proof}
\begin{proposition}\label{prop: explicit Richelot map 2}
	
	Under the embeddings of $H^1(G_K, \widehat{J}[\widehat{\phi}])$ and $H^1(G_K, J[2])$ in $(K^*/(K^*)^2)^3$ and  $(K^*/(K^*)^2)^5$ respectively as described in Section \ref{sec:explicit-embeddings}, the map $\Psi: H^1(G_K, J[2]) \rightarrow H^1(G_K, \widehat{J}[\widehat{\phi}])$ induced from  $J[2] \xrightarrow{\phi} \widehat{J}[\widehat{\phi}]$ is given by 
	$$(a_1, a_2, a_3, a_4, a_5) \mapsto (a_1, a_2a_3, a_4a_5).$$\\
	
\end{proposition}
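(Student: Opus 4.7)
The plan is to follow the structure of the proof of Proposition \ref{prop: explicit Richelot map 1}: exhibit a map $\psi : (\mu_2)^5 \to (\mu_2)^3$ fitting into a commutative diagram of short exact sequences
\[
\begin{tikzcd}
0 \arrow[r] & J[2] \arrow[r, "w_2"] \arrow[d, "\phi"'] & (\mu_2)^5 \arrow[r, "N"] \arrow[d, "\psi"] & \mu_2 \arrow[r] \arrow[d, "="] & 0 \\
0 \arrow[r] & \widehat{J}[\widehat{\phi}] \arrow[r, "w_{\widehat{\phi}}"] & (\mu_2)^3 \arrow[r, "N"] & \mu_2 \arrow[r] & 0,
\end{tikzcd}
\]
and then pass to $H^1$ by functoriality to read off the claimed formula.

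To determine $\psi$ on the image of $w_2$, take $P \in J[2]$ with coordinates $w_2(P) = (a_1, \dots, a_5)$, so that $a_j = e_2(P, T_j)$ where I write $T_j = \{(\omega_j, 0), \infty\}$. The $i$-th coordinate of $w_{\widehat{\phi}}(\phi(P))$ is $e_\phi(P_i, \phi(P))$, which by the defining compatibility of the Weil pairing for $\phi$ recalled in Section \ref{sec:weil-pairing-for-richelot-isogeny} is equal to $e_2(P_i, P)$.

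The key step is to express each $P_i \in J[\phi]$ as a sum of the $T_j$ inside $J$. Proposition \ref{prop:Richelot_explicit_formulae} together with Remark \ref{rem:richelot} gives $P_1 = T_1$, while the $\Pic^2 \to \Pic^0$ identification in Section \ref{sec:the-set-up} yields $P_2 = \{(\omega_2, 0), (\omega_3, 0)\} = T_2 + T_3$ and $P_3 = \{(\omega_4, 0), (\omega_5, 0)\} = T_4 + T_5$. Bilinearity of $e_2$ then forces $e_2(P_1, P) = a_1$, $e_2(P_2, P) = a_2 a_3$, and $e_2(P_3, P) = a_4 a_5$, so $w_{\widehat{\phi}}(\phi(P)) = (a_1, a_2 a_3, a_4 a_5)$. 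Extending by the same formula $\psi(a_1, \dots, a_5) = (a_1, a_2 a_3, a_4 a_5)$ to all of $(\mu_2)^5$ clearly intertwines the two norm maps (since $a_1 \cdot a_2 a_3 \cdot a_4 a_5 = N(a_1, \dots, a_5)$), so the right-hand square also commutes with the right vertical arrow equal to the identity.

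The main (minor) obstacle is verifying the identifications $P_2 = T_2 + T_3$ and $P_3 = T_4 + T_5$ inside $J$, which is a direct calculation with the $\Pic$ identification of Section \ref{sec:the-set-up}. Once these are in hand, applying $H^1$ to the commutative diagram and using the explicit embeddings of Section \ref{sec:explicit-embeddings} yields the desired formula $(a_1, a_2, a_3, a_4, a_5) \mapsto (a_1, a_2 a_3, a_4 a_5)$ on cohomology.
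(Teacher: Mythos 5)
Your proposal is correct and follows essentially the same route as the paper: the same commutative diagram of short exact sequences, the same use of the compatibility $e_{\phi}(P_i,\phi(P))=e_2(P_i,P)$, and then functoriality on $H^1$. The only difference is that you spell out the decompositions $P_1=T_1$, $P_2=T_2+T_3$, $P_3=T_4+T_5$ and the norm-compatibility of $\psi$, which the paper leaves implicit.
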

\begin{proof}\label{proof: explicit richelot map 2}
	
	Consider the following commutative diagram whose rows are exact sequences  
	
	\[\begin{tikzcd}
	0 \arrow[r]&J[2] \arrow[r, "{w_2}"] \arrow[d, "\phi"]& (\mu_2)^5 \arrow[r, "N"] \arrow[d, "\psi"]& \mu_2 \arrow[r] \arrow[d, "="]& 0\\
	0 \arrow[r]&\widehat{J}[\widehat{\phi}] \arrow[r, "{w_{\widehat{\phi}}}"] & (\mu_2)^3 \arrow[r, "N"] & \mu_2 \arrow[r] & 0.
	\end{tikzcd}\]\\
	
	Suppose $P \in J[2]$  maps to $(\alpha_1, \alpha_2, \alpha_3, \alpha_4, \alpha_5)$ via $w_2$. Then  $\alpha_i=e_2(P, \{(\omega_i, 0), \infty\})$. Recall that $e_{\widehat{\phi}}(\phi(P), P_i)=e_2(P, P_i)$ by the discussion at the end of Section \ref{sec:weil-pairing-for-richelot-isogeny}. This implies that $\phi(P)$ maps to $(\alpha_1, \alpha_2\alpha_3, \alpha_4\alpha_5)$ via $w_{\widehat{\phi}}$. Therefore, we can verify that the induced map $\Psi: H^1(G_K, J[2]) \rightarrow H^1(G_K, \widehat{J}[\widehat{\phi}])$ under the embeddings in Section \ref{sec:explicit-embeddings} is  given by $$(a_1, a_2, a_3, a_4, a_5) \mapsto (a_1, a_2a_3, a_4 a_5).$$\\
	
	\begin{remark}\label{rem: richelot H^1 SES}
		We observe that, under the assumption of this section, we have the following short exact sequence:
		$$0 \rightarrow H^1(G_K, J[\phi]) \rightarrow H^1(G_K, J[2]) \rightarrow H^1(G_K, \widehat{J}[\widehat{\phi}]) \rightarrow 0.$$
		The injectivity of the map $H^1(G_K, J[\phi]) \rightarrow H^1(G_K, J[2])$ is due to the surjectivity of $J(K)[2] \xrightarrow{\phi} \widehat{J}(K)[\widehat{\phi}]$. Observe that the element in $H^1(G_K, \widehat{J}[\widehat{\phi}])$ represented by $(a, b, c)$ has a preimage in $H^1(G_K, J[2])$ represented by $(a, 1, b, 1, c)$ by Proposition \ref{prop: explicit Richelot map 2}. This implies that $H^1(G_K, J[2]) \rightarrow H^1(G_K, \widehat{J}[\widehat{\phi}])$ is surjective.\\
		
	\end{remark}
	
	\begin{remark}
		Let $v$ be a place of $K$. We also have the explicit embeddings of $H^1(G_K, J[\phi])$ and $H^1(G_K, J[2])$ described in Section \ref{sec:explicit-embeddings} as well as the explicit maps given in this section if we replace $K$ with $K_v$ or $K_v^{nr}$.\\
		
	\end{remark}
\end{proof}

Using the above three propositions, we now have the explicit formula for the Cassels-Tate pairing in the case of a Richelot isogeny.\\

\begin{theorem}\label{thm: CTP}
	Let $J$ be the Jacobian variety of a genus two curve defined over a number field $K$. Suppose all points in $J[2]$ are defined over $K$ and there exists a Richelot isogeny $\phi: J \rightarrow \widehat{J}$ where $\widehat{J}$ is the Jacobian variety of another genus two curve. Let $\widehat{\phi}$ be the dual isogeny of $\phi$. Consider $a, a' \in \text{Sel}^{\widehat{\phi}}(\widehat{J})$. Suppose $(\alpha_1', \alpha_2', \alpha_3') \in (K^*/(K^*)^2)^3$ represents $a'$. For any place $v$, we let $P_v \in J(K_v)$  denote a lift of $a_v \in H^1(G_{K_v}, \widehat{J}[\widehat{\phi}])$ and suppose $\delta_2(P_v) \in H^1(G_{K_v}, J[2])$ is represented by $(x_{1, v}, x_{2, v},x_{3, v},x_{4, v},x_{5, v}) \in (K^*/(K^*)^2)^5$. Then we have 
	$$\langle a, a' \rangle_{CT}=\prod_v(x_{2, v}x_{4, v}, \alpha_1')_v(x_{4, v}, \alpha_2')_v(x_{2, v},\alpha_3')_v,$$
	where $(\;, \;)_v$ represents the Hilbert symbol.\\

	\end{theorem}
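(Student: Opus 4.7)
The plan is to unwind the definition of the Cassels-Tate pairing from Section \ref{sec:definition-of-the-cassels-tate-pairng-in-the-case-of-a-richelot-isogeny} (applied with the roles of $\phi$ and $\widehat{\phi}$ interchanged) and to substitute the explicit descriptions of the connecting maps and the cup product given in Propositions \ref{prop:explicit weil pairing cup }, \ref{prop: explicit Richelot map 1} and \ref{prop: explicit Richelot map 2}. The only extra input will be global Hilbert reciprocity, used to kill the contributions coming from a chosen global lift.

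First I fix a global lift $a_1 \in H^1(G_K, J[2])$ of $a$ under the map induced by $J[2] \xrightarrow{\phi} \widehat{J}[\widehat{\phi}]$; its existence is guaranteed by Remark \ref{rem: richelot H^1 SES}. Writing $a = (\alpha_1, \alpha_2, \alpha_3) \in (K^*/(K^*)^2)^3$, Proposition \ref{prop: explicit Richelot map 2} lets me choose a representative $(y_1, y_2, y_3, y_4, y_5) \in (K^*/(K^*)^2)^5$ of $a_1$ with $y_1 = \alpha_1$, $y_2 y_3 = \alpha_2$ and $y_4 y_5 = \alpha_3$ modulo squares.

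At each place $v$, I take $P_v \in J(K_v)$ lifting $a_v$ and write $\delta_2(P_v) = (x_{1,v}, x_{2,v}, x_{3,v}, x_{4,v}, x_{5,v})$ under the embedding of Section \ref{sec:explicit-embeddings}. Commutativity of the diagram in the definition of the pairing, together with Proposition \ref{prop: explicit Richelot map 2}, gives $\Phi(\delta_2(P_v)) = a_v = \Phi(a_{1,v})$, so the product $\delta_2(P_v) \cdot a_{1,v}$ in $(K_v^*/(K_v^*)^2)^5$ lies in the image of $\Psi$. Comparing with the formula $\Psi(\alpha, \beta, \gamma) = (1, \gamma, \gamma, \beta, \beta)$ from Proposition \ref{prop: explicit Richelot map 1} and invoking the norm-a-square condition cutting $H^1(G_{K_v}, J[\phi])$ out of $(K_v^*/(K_v^*)^2)^3$, I read off the preimage
\[
\rho_v \;=\; \bigl(x_{2,v}\, x_{4,v}\, y_{2,v}\, y_{4,v},\; x_{4,v}\, y_{4,v},\; x_{2,v}\, y_{2,v}\bigr).
\]
Applying Proposition \ref{prop:explicit weil pairing cup } then yields
\[
\mathrm{inv}_v(\rho_v \cup_\phi a_v') \;=\; (x_{2,v} x_{4,v} y_{2,v} y_{4,v}, \alpha_1')_v \,(x_{4,v} y_{4,v}, \alpha_2')_v \,(x_{2,v} y_{2,v}, \alpha_3')_v,
\]
written multiplicatively via $\mathrm{Br}(K_v)[2] \cong \{\pm 1\}$.

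Finally I take the product over all places and expand each Hilbert symbol by bilinearity. The pieces involving only the $y_{i,v}$ assemble into $\prod_v (y_2 y_4, \alpha_1')_v \cdot \prod_v (y_4, \alpha_2')_v \cdot \prod_v (y_2, \alpha_3')_v$, and since each $y_i$ and each $\alpha_j'$ lies in $K^*$, every one of these products equals $1$ by Hilbert reciprocity. What remains is exactly the formula in the statement. The main piece of bookkeeping is aligning the embeddings of $H^1(G_K, J[\phi])$ in $(K^*/(K^*)^2)^3$ and of $H^1(G_K, J[2])$ in $(K^*/(K^*)^2)^5$ correctly when inverting $\Psi$ to identify $\rho_v$; once that is done, independence of the final expression from the choices of $a_1$ and of the lifts $P_v$ follows from the intrinsic independence of the Cassels-Tate pairing from the choices in its definition.
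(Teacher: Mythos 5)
Your proposal is correct and follows essentially the same route as the paper: unwind the definition of the pairing and substitute the explicit formulas of Propositions \ref{prop:explicit weil pairing cup }, \ref{prop: explicit Richelot map 1} and \ref{prop: explicit Richelot map 2}. The only difference is that the paper takes the specific global lift $a_1 = (\alpha_1, 1, \alpha_2, 1, \alpha_3)$, which forces $y_2 = y_4 = 1$ and makes the final Hilbert reciprocity step unnecessary; your version with an arbitrary lift plus reciprocity is equally valid and has the minor advantage of exhibiting explicitly the independence of the answer from the choice of $a_1$.
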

\begin{proof}
	Suppose $a$ is represented by $(\alpha_1, \alpha_2, \alpha_3)\in (K^*/(K^*)^2)^3$. Then it has a preimage $a_1 \in H^1(G_K, J[2])$ represented by $(\alpha_1, 1, \alpha_2, 1, \alpha_3)$ by Proposition \ref{prop: explicit Richelot map 2}. So following the definition of $\langle a, a'\rangle_{CT}$, we need to compute $\rho_v \cup_{\phi, v} \eta \in H^2(G_{K_v}, \bar{K_v}^*)$ where $\rho_v \in H^1(G_{K_v}, J[\phi])$ is a lift of $\delta_2(P_v)-a_{1, v}$ and  $\cup_{\phi, v}$ is the cup product induced by $e_{\phi}$. Since $\delta_2(P_v)-a_{1, v}$ is represented by $(x_{1, v}/\alpha_1, x_{2, v}, x_{3, v}/\alpha_2, x_{4, v}, x_{5, v}/\alpha_3)$, by Proposition \ref{prop: explicit Richelot map 1}, $\rho_v$ is represented by $(x_{2, v}x_{4, v}, x_{4, v}, x_{2, v})$. Hence, by Proposition \ref{prop:explicit weil pairing cup }, we know $\langle \epsilon, \eta\rangle_{CT}=  \sum_{v}\text{inv}_v((x_{2, v}x_{4, v}, \alpha_1')+(x_{4, v}, \alpha_2')+(x_{2, v},\alpha_3'))=\prod_v (x_{2, v}x_{4, v}, \alpha_1')_v(x_{4, v}, \alpha_2')_v(x_{2, v},\alpha_3')_v.$\\

	\end{proof}

\section{Computational details}
In this section, we will describe some further details for the explicit computation for the Cassels-Tate pairing using the formula in Theorem \ref{thm: CTP}.

\subsection{Embedding of $\mathbf{\widehat{J}(K)/\phi(J(K))}$ and $\mathbf{J(K)/2J(K)}$}

General results show that we have the injection, which is the composition of the connecting map $\delta_{\phi}: \widehat{J}(K)/\phi(J(K)) \rightarrow H^1(G_K, J[\phi])$ and the embedding described above $H^1(G_K, J[\phi]) \rightarrow (K^*/(K^*)^2)^3$. This is discussed  in \cite[Section 3]{arbitrarily large} \cite[Chapter 10 Section 2]{the book}. More explicitly, we have

$$\begin{array}{cccc}
\mu^{\phi}: & \widehat{J}(K)/\phi(J(K))&\longrightarrow&K^*/(K^*)^2\times K^*/(K^*)^2 \times K^*/(K^*)^2  \\
&\{(x_1, y_1), (x_2, y_2)\}&\longmapsto&(L_1(x_1)L_1(x_2), L_2(x_1)L_2(x_2), L_3(x_1)L_3(x_2))
\end{array}.$$
Similarly we have the injection:
\begin{equation}\label{eqn: mu}
\begin{array}{cccc}
\mu^{\widehat{\phi}}: & J(K)/\widehat{\phi}(\widehat{J}(K))&\longrightarrow&K^*/(K^*)^2\times K^*/(K^*)^2\times K^*/(K^*)^2  \\
&\{(x_1, y_1), (x_2, y_2)\}&\longmapsto&(G_1(x_1)G_1(x_2), G_2(x_1)G_2(x_2), G_3(x_1)G_3(x_2))
\end{array}.
\end{equation}
Note the following special cases. When $x_j$ is a root of $G_i$, then $G_i(x_j)$ should be taken to be $\prod_{l \in \{1, 2, 3\}\setminus \{i\}}G_l(x_j)$. We have a similar solution when $x_j$ is a root of $L_i$, which is replacing $L_i(x_j)$  with $\triangle \prod_{l \in \{1, 2, 3\}\setminus \{i\}}L_l(x_j)$.  When $(x_j, y_j)=\infty$, then $G_i(x_j)$ is taken to be 1. In the case where  one of $L_i$ is linear and $(x_j, y_j)=\infty$, then $L_i(x_j)$ is taken to be 1.\\

On the other hand, we have a standard injection, which is the composition of the connecting map $\delta_2: J(K)/2J(K) \rightarrow H^1(G_K, J[2])$ and the embedding described above $H^1(G_K, J[2]) \rightarrow (K^*/(K^*)^2)^5$. This can also be found in \cite[Section 3]{arbitrarily large} \cite[Chapter 10 Section 2]{the book}.

$$\begin{array}{cccc}
\mu: & J(K)/2J(K)&\longrightarrow&(K^*/(K^*)^2)^5  \\
&\{(x_1, y_1), (x_2, y_2)\}&\longmapsto& ((x_1-\omega_1)(x_2-\omega_1), ..., (x_1-\omega_5)(x_2-\omega_5))\\
\end{array}.$$
Note the following special cases. When $(x_j, y_j) = (\omega_i, 0)$, then $x_j - \omega_i$ should be taken to be $\lambda \prod_{l \in {1, 2, 3, 4, 5} \setminus \{i\}}(\omega_i - \omega_l).$ When $(x_j, y_j)=\infty$, then $x_j-\omega_i$ is taken to be $\lambda$.\\

Observe that the image of the maps $\mu^{\phi}$ and $\mu^{\widehat{\phi}}$ are both contained in the kernel of $(K^*/(K^*)^2)^3 \xrightarrow{N} K^*/(K^*)^2$. Similarly, the image of $\mu$ is contained in the kernel of $(K^*/(K^*)^2)^5 \xrightarrow{N} K^*/(K^*)^2$. \\

\subsection{Bounding the set of bad primes}
The formula for the local Cassels-Tate pairing in Theorem \ref{thm: CTP} is trivial outside the finite set of places  $S=  \{\text{places of bad reduction for } \mathcal{C}\} \cup \{\text{places dividing } 2\} \cup \{\text{infinite places}\}.$ This is explained as follows.\\

By \cite[Chapter I, Section 6]{arithmetic duality} \cite[Section 3]{selmer}, we have $\text{Sel}^{\phi}(J) \subset H^1(G_K, J[\phi]; S)=\ker \left(H^1(G_K, J[\phi]) \rightarrow \prod_{v \notin S} H^1(G_{K_v^{nr}}, J[\phi])\right)$. Similarly, $\text{Sel}^{\widehat{\phi}}(\widehat{J}) \subset H^1(G_K, \widehat{J}[\widehat{\phi}]; S)$ and $\text{Sel}^2(J) \subset H^1(G_K, J[2]; S)$. It can be shown that $\ker\big(K^*/(K^*)^2 \rightarrow \prod_{v \notin S}K_v^{nr *}/(K_v^{nr *})^2 \big)=K(S, 2)$, where $K(S, 2)$ is defined to be $\{x \in K^*/(K^*)^2: \text{ord}_v(x) \text{ is even for all } v\notin S \}$.  So, we have  $\alpha_i, \alpha'_i \in K(S, 2)$ for all $i$. Suppose $v \notin S$. We know there exists a representation of the image of $a_{1, v}$ in $(K_v^*/(K_v^*)^2)^5$ such that all its coordinates have valuation 0. Since $J(K_v^{nr}) \xrightarrow{2} J(K_v^{nr})$ is surjective by \cite[Lemma 3.4]{surj}, the map $H^0(G_{K_v^{nr}}, J) \rightarrow H^1(G_{K_v^{nr}}, J[2])$
is the zero map and hence the image of $P_v$ is trivial in $H^1(G_{K_v^{nr}}, J[2]).$ This implies that $\delta_2(P_v) \in H^1(G_{K_v}, J[2]) \subset (K_v^*/(K_v^*)^2)^5$ has a representation such that all its coordinates have valuation 0. This implies that $\delta_2(P_v)- a_{1, v}\in H^1(G_{K_v}, J[2]) \subset (K_v^*/(K_v^*)^2)^5$ has a representation such that all its coordinates have valuation 0. Then, by the formula in Proposition \ref{prop: explicit Richelot map 1}, $\rho_v \in H^1(G_{K_v}, \widehat{J}[\widehat{\phi}]) \subset (K_v^*/(K_v^*)^2)^3$ also has a representation such that all its coordinates have valuation 0. From the first part of the theorem, we know computing $\langle a, a'\rangle_{CT}$ requires computing the Hilbert symbol. It can be checked that the Hilbert symbol between $x$ and $y$ is trivial when  the valuations of $x$, $y$ are both 0 and the local field has odd residue characteristic. Hence, the local Cassels-Tate pairing is trivial for all but finitely many places contained in the set $S$.\\

\section{Worked Example}

We  explicitly compute the Cassels-Tate pairing in an example where this  improves the rank bound obtained via descent by Richelot isogeny. We will be using the same notation as in Section \ref{sec:definition-of-the-cassels-tate-pairng-in-the-case-of-a-richelot-isogeny} to compute $\langle \;,\;\rangle_{CT}$ on $\text{Sel}^{\widehat{\phi}}(\widehat{J}) \times \text{Sel}^{\widehat{\phi}}(\widehat{J})$. Our base field $K$ is the field of the rationals, $\Q$. \\

Let us consider the following genus two curve which is obtained by taking $k=113$ in \cite[Theorem 1]{arbitrarily large}
$$\mathcal{C}: y^2 = (x + 2 \cdot 113)x(x-6 \cdot113)(x +113)(x - 7 \cdot 113),$$ 
with $G_1 = (x + 2 \cdot 113),
G_2 = x(x-6 \cdot113),
G_3 = (x +113)(x - 7 \cdot 113) $ and

$$\triangle = \begin{bmatrix}
2 \cdot 113&1&0\\
0& -6 \cdot 113 &1\\
-7 \cdot 113^2& -6 \cdot 113 &1
\end{bmatrix}\\
=-7 \cdot 113^2,$$
\begin{align*}
&L_1 = G_2'G_3 - G_3'G_2 =-14 \cdot 113^2(x-3 \cdot 113),\\
&L_2 = G_3'G_1 - G_1'G_3 = (x + 5 \cdot 113)(x - 113),\\
&L_3 = G_1'G_2 - G_2'G_1 = -(x + 6 \cdot 113)(x - 2 \cdot 113).\\
\end{align*}

So we have a Richelot isogeny $\phi$ from $J$, the Jacobian variety of $\mathcal{C}$, to $\widehat{J}$, the Jacobian variety of the following curve.
$$\mathcal{\widehat{C}}: y^2 = -2(x -3 \cdot 113)(x + 5 \cdot 113)(x - 113)(x + 6 \cdot 113)(x - 2 \cdot 113)  $$

It can be shown, using MAGMA \cite{magma}, that: 

\begin{align}\label{eqn: sel}
&\text{Sel}^{\widehat{\phi}}(\widehat{J}) = \langle(2 \cdot 113, -14 \cdot 113, -7), (113, 7, 7 \cdot 113), (113, 113, 1), (2, 2, 1), (1, 7, 7)\rangle \subset (\Q^*/(\Q^*)^2)^3,\\
&\text{Sel}^{\phi}(J) = \langle(113, -7 \cdot 113, -7), (2 \cdot 113, 7, 14 \cdot 113), (113, 1, 113)\rangle \subset (\Q^*/(\Q^*)^2)^3.
\end{align}

Now we will compute the Cassels-Tate pairing matrix on $\text{Sel}^{\widehat{\phi}}(\widehat{J}) \times \text{Sel}^{\widehat{\phi}}(\widehat{J}).$ Since $(2 \cdot 113, -14 \cdot 113, -7), (113, 7, 7 \cdot 113)$ are images of elements $\{(0, 0), (-2 \cdot 113, 0)\}$ and $\{(-2\cdot 113, 0), (-113, 0)\}$ in $J(\Q)/\widehat{\phi}(\widehat{J}(\Q))$ via $\mu^{\widehat{\phi}}$ in \eqref{eqn: mu}, they are in the kernel of the Cassels-Tate pairing. So it is sufficient to look at the pairing on $\langle (113, 113, 1), (2, 2, 1), (1, 7, 7)\rangle\times \langle (113, 113, 1), (2, 2, 1), (1, 7, 7)\rangle$.\\

Since the primes of bad reduction are $\{2, 3, 7, 113\}$, we know these are the only primes for which we need to consider by Theorem \ref{thm: CTP}. The tables below give details of the local computations at these primes.\\

Let $a = (113, 113, 1) \in \text{Sel}^{\widehat{\phi}}(\widehat{J})$. By the formula given in Proposition \ref{prop: explicit Richelot map 2}, it has a lift $a_1=(113,1, 113, 1, 1) \in H^1(G_K, J[2]).$ Then for the local calculation, we have the following table:\\

\begin{center}
	\begin{tabular}{||c|ccccc}
		
		place $v$ & $\infty$& 2&3 &7&	113\\
		\hline
		$P_v$ &id&id&$\{(0, 0), (-113, 0)\}$&id&$\{(0, 0), (-2 \cdot 113, 0)\}$\\
		$\delta_2(P_v)$&id&id&$(-1, 3, -3, -1, -1)$&id&(113, 3 $\cdot$113, 3, 1, 1)\\
		
		$a_{1, v}$&id&id&$(-1, 1, -1, 1, 1)$&id&(113, 1, 113, 1, 1)\\
		
		$\delta_2(P_v)-a_{1, v}$&id&id&$(1, 3, 3, -1, -1)$&id&(1, 3 $\cdot$113, 3 $\cdot$113, 1, 1)\\
		
		$\rho_v$&id&id&$(-3, -1, 3)$&id&(3$\cdot$113, 1, 3$\cdot$113)\\
		
	\end{tabular}
\end{center}
$\;$\\

Now let $a = (2,2, 1) \in \text{Sel}^{\widehat{\phi}}(\widehat{J})$. By the formula given in Proposition \ref{prop: explicit Richelot map 2}, it has a lift $a_1= (2,1, 2, 1, 1) \in H^1(G_K, J[2]). $
Then for the local calculation, we have the following table:\\

\begin{center}
	\begin{tabular}{||c|ccccc}
		
		place $v$ & $\infty$& 2&3 &7&	113\\
		\hline
		$P_v$ &id&$\{(0, 0), (-2 \cdot 113, 0)\}$&$\{(0, 0), (-113, 0)\}$&id&id\\
		$\delta_2(P_v)$&id&$(2, 6, 3, -1, -1)$&$(-1, 3, -3, -1, -1)$&id&id\\
		
		$a_{1, v}$&id&(2, 1, 2, 1, 1)&$(-1, 1, -1, 1, 1)$&id&id\\
		
		$\delta_2(P_v)-a_{1, v}$&id&$(1,6, 6, -1, -1)$&$(1, 3, 3, -1, -1)$&id&id\\
		
		$\rho_v$&id&$(-6, -1, 6)$&$(-3, -1, 3)$&id&id\\
		
	\end{tabular}
\end{center}
$\;$\\

Lastly  let $a = (1, 7, 7) \in \text{Sel}^{\widehat{\phi}}(\widehat{J})$. By the formula given in Proposition \ref{prop: explicit Richelot map 2}, it has a lift $a_1= (1, 1, 7, 1, 7) \in H^1(G_K, J[2]). $
Then for the local calculation, we have the following table:\\

\begin{center}
	\begin{tabular}{||c|ccccc}
		
		place $v$ & $\infty$& 2&3 &7&	113\\
		\hline
		$P_v$ &id&$\{(-2 \cdot 113, 0), (-113, 0)\}$&id&$\{(-2 \cdot 113, 0), (-113, 0)\}$&id\\
		$\delta_2(P_v)$&id&$(1, 2, -2, -2, 2)$&id&(1, 1, 7, 7, 1)&id\\
		
		$a_{1, v}$&id&$(1, 1, -1, 1, -1)$&id&(1, 1, 7, 1, 7)&id\\
		
		$\delta_2(P_v)-a_{1, v}$&id&$(1, 2, 2,-2, -2)$&id&(1, 1, 1, 7,7)&id\\
		
		$\rho_v$&id&$(-1, -2, 2)$&id&(7, 7, 1)&id\\

	\end{tabular}
\end{center}

$\;$\\

Following the explicit algorithm for computing the Cassels-Tate pairing, we get that the Cassels-Tate pairing between $(113, 113, 1)$ and $(2, 2, 1)$ is the only nontrivial one.  \\

Therefore, we get the $5\times 5$ Cassels-Tate pairing matrix from the 5 generators of $\text{Sel}^{\widehat{\phi}}(\widehat{J})$.  More specifically, the $ij^{th}$ entry of the matrix is the Cassels-Tate pairing between the $i^{th}$ and the $j^{th}$ generators of $\text{Sel}^{\widehat{\phi}}(\widehat{J})$, where the generators are in the same order as listed in the Selmer group $\text{Sel}^{\widehat{\phi}}(\widehat{J})$ \eqref{eqn: sel}.
\\
$$
\begin{bmatrix}
1&1&1&1&1\\
1&1&1&1&1\\
1&1&1&-1&1\\
1&1&-1&1&1\\
1&1&1&1&1\\

\end{bmatrix}
$$
$\;$\\

\begin{remark}

From the computation above, we have shown that the kernel of the Cassels-Tate pairing has dimension 3. We make the following observations:\\
\begin{itemize}
	\item We bound the rank of  $J(\Q)$ via bounding $|J(\Q)/\widehat{\phi}(\widehat{J}(\Q))|$ by $|\ker\langle \;,\;\rangle_{CT}|= 2^3$ instead of $|\text{Sel}^{\widehat{\phi}}(\widehat{J})| =2^5$. This improves the rank bound of $J(\Q)$ from $4$ to $2$.\\
	\item Consider the exact sequence \eqref{eqn: exact}. It can be shown that  $\Ima \alpha$ is contained inside $\ker \langle \;, \; \rangle_{CT}$, the kernel of the Cassels-Tate pairing on $\text{Sel}^{\widehat{\phi}}(\widehat{J}) \times \text{Sel}^{\widehat{\phi}}(\widehat{J})$. Indeed, if $a \in \text{Sel}^{\widehat{\phi}}(\widehat{J})$ is equal to $\alpha(b), $ where $b \in \text{Sel}^2(J)$, then following the earlier notations, we can let $a_1 = b$. Then we can pick $P_v \in J(\Q_v)$ to be the lift of $a_{1, v}$. Therefore,  $\delta_2(P_v)-a_{1,v}= 0 \in H^1(G_{\Q_v}, J[2])$ which implies, $a \in \ker\langle \;,\; \rangle_{CT}.$ Hence, we can always bound $|\text{Sel}^2(J)|$ and this bound will be sharp when $\Ima \alpha = \ker \langle \;,\;\rangle_{CT}$. \\
	
	We used MAGMA to compte the size of $\text{Sel}^{2}(J)$, which is equal to $2^6$, and we have the exact sequence:
	$$0 \rightarrow J[\phi](\Q) \rightarrow J[2](\Q) \rightarrow  \widehat{J}[\widehat{\phi}](\Q) \rightarrow \text{Sel}^{\phi}(J) \rightarrow \text{Sel}^2(J) \xrightarrow{\alpha}\ker \langle \;, \;\rangle_{CT}\rightarrow0.$$
	$$\text{size}=2^2 \;\;\;\ \text{size}=2^4   \;\;\;\ \text{size}=2^2   \;\;\;\ \text{size}=2^3    \;\;\;\ \text{size}=\boldsymbol{2^6}  \;\;\;\ \text{size}=2^3$$
	So for this example, we have turned the descent by Richelot isogeny into a 2-descent via computing the Cassels-Tate pairing. \\
	
\end{itemize}

\end{remark}

\begin{bibdiv}
\begin{biblist}

\bib{surj}{article}{
	author ={Agashe, A.},
	author ={Stein, W.},
	title={Visibility of Shafarevich–Tate Groups of Abelian Varieties},
	journal={Journal of Number Theory},
	volume={97},
	number={1},
	date = {2002},
	pages={171-185},
	doi={10.1006/jnth.2002.2810},
	
}
\bib{monique}{article}{
	author ={van Beek, M.},
	title={Computing the Cassels-Tate Pairing},
	note={Doctoral Dissertation. University of Cambridge, 2015.}
}

	\bib{3 isogeny}{article}{
		author ={van Beek, M.},
	author ={Fisher, T. A.},
	title={Computing the Cassels-Tate pairing on 3-isogeny Selmer groups via cubic norm equations},
	journal={Acta Arithmetica},
	volume={185},
	number={4},
	date = {2018},
	pages={367-396},
	doi={10.4064/AA171108-11-4},
}

	\bib{magma}{article}{
	author ={Bosma, W.},
	author={Cannon, J. },
	author={Playoust, C. }
	title={The Magma algebra system. I. The user language},
	journal={J. Symbolic Comput.}
	volume={24}
	number={3-4}
	date={1997}
	pages={235–265}
	doi={10.1006/jsco.1996.0125}
}

\bib{bruin}{article}{
	author ={Bruin, N.},
	author ={Doerksen, K.},
	title={The Arithmetic of Genus Two Curves with (4, 4)-Split Jacobians},
	journal={Canadian Journal of Mathematics},
	volume={63},
	date = {2011},
	number={5},
	pages={ 992-1024},
	doi={10.4153/CJM-2011-039-3},
}

\bib{cassels1}{article}{
	author ={Cassels, J. W. S.},
	title={Arithmetic on Curves of Genus 1. I. On a conjecture of Selmer.},
	journal={Journal für die reine und angewandte Mathematik},
	volume={202},
	date = {1959},
	pages={52-99},
}

\bib{cassels2}{article}{
	author ={Cassels, J. W. S.},
	title={J. W. S. Cassels, Arithmetic on curves of genus 1, IV. Proof of the Hauptvermutung.},
	journal={Journal für die reine und angewandte Mathematik},
	volume={211},
	date = {1962},
	pages={95-112},
	doi={10.1515/crll.1962.211.95 },
}

\bib{cassels98}{article}{
	author ={Cassels, J. W. S.},
	title={Second Descents for Elliptic Curves},
	journal={Journal für die reine und angewandte Mathematik},
	volume={494}
	date = {1998},
	pages={101-127},
	doi={10.1515/crll.1998.001 },
}

	\bib{the book}{book}{
		author ={Cassels, J. W. S. },
		author ={Flynn, E. V.},
		title={Prolegomena to a MiddleBrow Arithmetic of Curves of Genus 2},
		series={London Mathematical Society Lecture Note Series},
		volume={230},
		publisher={Cambridge University Press},
		date= {1996},
		isbn={9780511526084},
		doi={10.1017/CBO9780511526084},
	}
		\bib{algebraic number theory}{book}{
		author ={Cassels, J. W. S. },
		author ={Frohlich, A.},
		title={Algebraic Number Theory},
		series={Proceedings of an instructional conference organized by the London Mathematical Society (a nano advanced study institute) with the support of the international union},
		publisher={Academic Press Inc. (London) LTD.},
		date= {1967},
	}

			\bib{steve}{article}{
			author ={Donnelly, S.},
			title={Algorithms for the Cassels-Tate pairing},
			date={2015},
			note={preprint}
		}
		
	\bib{platonic}{article}{
		author ={Fisher, T. A.},
		title={The Cassels–Tate pairing and the Platonic solids},
		journal={Journal of Number Theory},
		volume={98},
		number={1}
		date = {2003},
		pages={105-155},
		doi={10.1016/S0022-314X(02)00038-0},
	}

	\bib{binary quartic}{article}{
		author ={Fisher, T. A.},
		title={On binary quartics and the Cassels-Tate pairing },
		date={2016},
		note={preprint},
	}

\bib{3 selmer}{article}{
	author ={Fisher, T. A.},
	author ={Newton, R.},
	title={Computing the Cassels-Tate pairing on the 3-Selmer group of an elliptic curve},
	journal={Journal of Number Theory},
	volume={10},
	number={7},
	date = {2014},
	pages={1881­1907},
	doi={10.1142/S1793042114500602},
	notes ={Available at http://centaur.reading.ac.uk/58175/}
}

\bib{arbitrarily large}{article}{
	author ={Flynn, E. V.},
	title={Arbitrarily Large Tate–Shafarevich Group on Abelian Surfaces},
	journal={Journal of Number Theory},
	volume={186},
	date = {2018},
	pages={248-258},
	doi={10.1016/j.jnt.2017.10.004},
	
}


\bib{QA}{book}{
	author ={Gille, P.},
	author ={Szamuely, T.},
	title={Central Simple Algebras and Galois Cohomology},
	series={Cambridge Studies in Advanced Mathematics},
	volume={101},
	publisher={Cambridge University Press},
	date= {2006},
	isbn={9780521861038},
}

\bib{abelian varieties1}{book}{
	author ={Milne, J.S.},
	title={Abelian Varieties},
	publisher={Springer, New York},
	date = {1986},
	note={pp 103-150 of Arithmetic Geometry},
}

\bib{arithmetic duality}{book}{
	author ={Milne, J.S.},
	title={Arithmetic Duality Theorems, Second Edition},
	publisher={BookSurge, LLC},
	date = {2006},
	pages={viii+339},
	isbn={1-4196-4274-X}
}

\bib{abelian varieties}{book}{
	author ={Milne, J.S.},
	title={Abelian Varieties, Second Edition},
	date = {2008},
	pages ={166+vi},
	note={Available at www.jmilne.org/math/},
}

\bib{poonen stoll}{article}{
	author ={Poonen, B.},
	author ={Stoll, M.},
	title={The Cassels-Tate pairing on polarized abelian varieties},
	journal={Annals of Mathematics},
	volume={150},
	date = {1999},
	number={3},
	pages={1109-1149},
	review={\MR{1740984}},
	doi={10.2307/121064<},
}

\bib{selmer}{article}{
	author ={Schaefer, E. F.},
	title={2-Descent on the Jacobians of Hyperelliptic Curves},
	journal={Journal of Number Theory},
	volume={51},
	date = {1995},
	number={2},
	pages={219-232},
	doi={10.1006/jnth.1995.1044},
}

\bib{local fields}{book}{
	author ={Serre, Jean-Pierre},
	title={Local Fields},
	publisher={Springer-Verlag New York},
	series={Graduate Texts in Mathematics},
	volume={67},
	date = {1979},
	isbn={978-0-387-90424-5},
	doi={10.1007/978-1-4757-5673-9},
	
}

\bib{local tate duality}{article}{
	author ={Skorobogatov, A.},
	
	title={Abelian varieties over local and global fields},
	note={TCC course, Spring 2016}
}

\bib{richelot}{article}{
	author ={Takashima, K.},
	author ={Yoshida, R.},
	title={An algorithm for computing a sequence of Richelot isogenies},
	journal={Bulletin of the Korean Mathematical Society},
	date = {2009},
	volume={46},
	number={4},
	pages={789-802},
	doi={10.4134/BKMS.2009.46.4.789},
}

\bib{tate}{article}{
	author ={Tate, J.},
	title={Duality theorems in Galois cohomology over number fields},
	journal={Proc. In- ternat. Congr. Mathematicians (Stockholm)},
	date = {1962},
	pages={288–295},
	note ={Inst. Mittag-Leffler, Djursholm (1963)}
}

\bib{thesis}{article}{
	author ={Yan, J.},
	title={Computing the Cassels-Tate Pairing for Jacobian Varieties of Genus Two Curves},
	note={Doctoral Dissertation. University of Cambridge, 2021.}
}

\end{biblist}
\end{bibdiv}
\end{document}